\title{Mahler Measure of ``Almost" Reciprocal Polynomials}
\author{J.C. Saunders\footnote{email:j8saunder@uwaterloo.ca} \footnote{Research of J.C. Saunders was supported by NSERC and the Queen Elizabeth II Graduate Scholarship in Science and Technology program} \\ University of Waterloo}
\newtheorem{defn}{Definition}[section]
\newtheorem{thm}{Theorem}[section]
\newtheorem{lemma}{Lemma}[section]
\newtheorem{note}{Note}[section]
\newtheorem{ex}{Example}[section]
\newtheorem{remark}{Remark}[section]
\begin{document}
\date{}
\maketitle
\begin{abstract}
Here we give a lower bound of the Mahler measure on a set of polynomials that are ``almost" reciprocal. Here ``almost" reciprocal means that the outermost coefficients of each polynomial mirror each other in proportion, while this pattern breaks down for the innermost coefficients.
\end{abstract}
\section{Introduction}
The Mahler measure of a polynomial $f$ with integer coefficients, denoted as $M(f)$, is defined to be the absolute value of the product of all of its roots having absolute value at least $1$ and its leading coefficient. If no such roots exist, the Mahler measure is defined to be the absolute value of the leading coefficient. In other words, if
\begin{equation*}
f(x)=a_n(x-\alpha_1)(x-\alpha_2)...(x-\alpha_n),
\end{equation*}
then
\begin{equation*}
M(f)=|a_n|\prod_{i=1}^n\max\{1,|\alpha_i|\}.
\end{equation*}
A major open problem dealing with Mahler measure is whether it can get arbitrarily close to $1$ without actually being $1$. More specifically, for any $\epsilon>0$, does there exist a polynomial $f$ with integer coefficients such that $1<M(f)<1+\epsilon$. This problem was first posed by Lehmer in $1933$ and has since sparked various problems in finding Mahler measures of polynomials \cite{lehmer}. Lehmer was able to show that the polynomial
\begin{equation*}
f(x)=x^{10}+x^9-x^7-x^6-x^5-x^4-x^3+x+1
\end{equation*}
has Mahler measure $M(f)=1.1762808...$. This is the smallest Mahler measure greater than $1$ that is currently known.
\newline
\newline
An important property of polynomials with regard to calculating their Mahler measures is whether they are reciprocal or not.
\begin{defn}
Let $f(x)$ be a polynomial of degree $n$. We define the reciprocal of $f(x)$ as $f^*(x):=x^nf(1/x)$. We say that $f(x)$ is a {\em reciprocal polynomial} if $f(x)=\pm f^*(x)$.
\end{defn}
In $1971$, Smyth showed that if $f$ is an irreducible polynomial with integer coefficients that doesn't have $0$ nor $1$ as a root and is not reciprocal, then $M(f)\geq M(x^3-x-1)=1.324717...$ \cite{smyth}. In $2004$, Borwein, Hare, and Mossinghoff modified Smyth's techniques to study polynomials with $\pm 1$ coefficients and polynomials where $f(x)\neq\pm f^*(x)$ and $f(x)\equiv\pm f^*(x)\pmod m$ \cite{borwein}. Given these conditions, they prove that if $m\geq 2$, then
\begin{equation*}
M(f)\geq\frac{m+\sqrt{m^2+16}}{4}
\end{equation*}
with this bound being sharp when $m$ is even.
The larger $m$ is, the more impressive this bound becomes. Here we modify Borwein, Hare, and Mossinghoff's proof techniques on achieving the above bound to study a new class of polynomials that we define to be ``$k$-nonreciprocal" for some integer $k\geq 0$. First, a definition:
\begin{defn}
Take a polynomial in $\mathbb{Z}[x]$, say $f(x)=\sum_{i=0}^na_ix^i$. For an integer $k\geq 1$, we say that $f(x)$ is $k$-nonreciprocal if $a_na_i=a_0a_{n-i}$ for all $1\leq i\leq k-1$ with $a_na_k\neq a_0a_{n-k}$.
\end{defn}
Like with Borwein, Hare, and Mossinghoff's result, we also prove that our bound is sharp and can get arbitrarily high, depending on the set of polynomials in question. More specifically, we prove the following.
\begin{thm}\label{thm:1}
Take a polynomial in $\mathbb{Z}[x]$, say $f(x)=\sum_{i=0}^na_ix^i$.  Suppose for some $k\in\mathbb{N}$, $2k\leq n$ we have $a_na_i=a_0a_{n-i}$ for all $1\leq i\leq k-1$. Let $M(f)$ denote the Mahler measure of $f$ and $\alpha=|a_ka_n-a_0a_{n-k}|$. Then
\begin{equation*}
M(f)\geq\frac{\alpha+\sqrt{\alpha^2+4(|a_0|+|a_n|)^2|a_0a_n|}}{2(|a_0|+|a_n|)}.
\end{equation*}
\end{thm}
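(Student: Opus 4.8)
The plan is to adapt the method of Smyth and of Borwein--Hare--Mossinghoff: from $f$, its reciprocal $f^*$, and the splitting of the roots of $f$ by modulus one builds an analytic function on the unit disc and then applies a Schwarz--Pick type coefficient estimate. First, though, I would clear away the degenerate cases. If $a_0=0$ the hypothesis forces $a_1=\dots=a_{k-1}=0$, so $f(x)=x^kh(x)$ with $h(0)=a_k$ and $M(f)=M(h)\ge|h(0)|=|a_k|$, which is exactly the asserted bound when $a_0=0$ (there $\alpha=|a_ka_n|$). If $\alpha=0$ the bound only asserts $M(f)\ge\sqrt{|a_0a_n|}$, immediate from $M(f)\ge|a_n|$ and $M(f)\ge|a_0|$. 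So assume henceforth $a_0\neq0$ and $\alpha\ge1$.

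Set $g(x)=a_0f^*(x)-a_nf(x)$. The coefficient of $x^i$ in $g$ is $a_0a_{n-i}-a_na_i$, which vanishes at $i=0$ and, by hypothesis, for $1\le i\le k-1$, while $|[x^k]g|=|a_0a_{n-k}-a_na_k|=\alpha\neq0$, so $g$ vanishes to order exactly $k$ at $0$. Writing $f(x)=a_n\prod_{i=1}^n(x-\alpha_i)$ one has $f^*(x)=a_n\prod_i(1-\alpha_ix)$ and (using $a_0\neq0$) $f(x)=a_0\prod_i(1-x/\alpha_i)$, so $g(x)/(a_0a_n)=\prod_i(1-\alpha_ix)-\prod_i(1-x/\alpha_i)$. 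Let $E=\{i:|\alpha_i|>1\}$ and let $I$ be the complementary set of indices; put $\Delta(x)=\prod_{i\in I}(1-\alpha_ix)\prod_{i\in E}(1-x/\alpha_i)$ and
\[
H_1(x)=\prod_{i\in E}\frac{1-\alpha_ix}{1-x/\alpha_i},\qquad H_2(x)=\prod_{i\in I}\frac{1-x/\alpha_i}{1-\alpha_ix},
\]
so that $g(x)/(a_0a_n)=\Delta(x)\bigl(H_1(x)-H_2(x)\bigr)$. Assuming first that $f$ has no zero on the unit circle, $H_1,H_2$ are analytic on the closed disc with $H_1(0)=H_2(0)=\Delta(0)=1$, and a short computation (treating real roots and complex-conjugate pairs separately) shows $|H_1|=\prod_{i\in E}|\alpha_i|=M(f)/|a_n|$ and $|H_2|=\left(\prod_{i\in I}|\alpha_i|\right)^{-1}=M(f)/|a_0|$ on $|x|=1$; by the maximum principle these are upper bounds throughout the disc. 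Since $g$ vanishes to order $\ge k$ at $0$ and $\Delta(0)=1$, the difference $H_1-H_2$ vanishes to order $\ge k$ at $0$ and $[x^k](H_1-H_2)=[x^k]g/(a_0a_n)$ has absolute value $\alpha/|a_0a_n|$.

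The quantitative input I would use is the elementary fact that \emph{if $q$ is analytic on the disc with $|q|\le1$ and $q(0)=\rho$, then $|[x^k]q|\le1-|\rho|^2$ for every $k\ge1$}: averaging $q(\zeta x)$ over the $k$-th roots of unity $\zeta$ gives a function of the form $Q(x^k)$ with $|Q|\le1$ and $Q(0)=\rho$, and $[x^k]q=Q'(0)$, so Schwarz--Pick applies. Applying this to $(|a_n|/M(f))H_1$ and to $(|a_0|/M(f))H_2$ (legitimate since $M(f)\ge\max(|a_0|,|a_n|)$) gives $|[x^k]H_1|\le(M(f)^2-a_n^2)/(|a_n|M(f))$ and $|[x^k]H_2|\le(M(f)^2-a_0^2)/(|a_0|M(f))$. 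Combining these with $|[x^k](H_1-H_2)|=\alpha/|a_0a_n|$ via the triangle inequality and clearing denominators yields
\[
\alpha\le\frac{(|a_0|+|a_n|)\bigl(M(f)^2-|a_0a_n|\bigr)}{M(f)},
\]
i.e. $(|a_0|+|a_n|)M(f)^2-\alpha M(f)-(|a_0|+|a_n|)|a_0a_n|\ge0$; solving this quadratic inequality for the positive number $M(f)$ gives exactly the bound in the statement.

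The step I expect to be the real obstacle is getting an estimate strong enough to produce the quadratic rather than a linear bound: a naive Schwarz lemma applied directly to $g$ divided by the outer part of $f$ only gives $M(f)\ge\alpha/(|a_0|+|a_n|)$, and it is precisely the normalization $H_1(0)=H_2(0)=1$ — which makes each $[x^k]H_j$ bounded by $1-|\rho_j|^2$ instead of by $1$ — that upgrades this to the theorem's bound; the right split of the roots and the boundary-modulus computation are the content here. A secondary technicality is the case in which $f$ has zeros on the unit circle, where $H_2$ may fail to be bounded on the disc; I would dispose of this by a limiting/continuity argument, such roots contributing trivially to all the products involved.
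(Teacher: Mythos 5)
Your argument is correct and is essentially the paper's own proof in a mildly repackaged form: your $H_1$ and $H_2$ are (up to constant normalizations) the Blaschke products $h$ and $g$ built from $f/f^*$ in the paper, your key coefficient estimate $|[x^k]q|\le 1-|q(0)|^2$ is exactly the Wiener lemma the paper cites (which you additionally prove via root-of-unity averaging and Schwarz--Pick), and the resulting quadratic inequality in $M(f)$ is identical. The only genuine differences are cosmetic: you extract the $k$-th coefficient from $a_0f^*-a_nf$ rather than from the power series of $f/f^*$, and you treat the degenerate cases $a_0=0$, $\alpha=0$ and the unit-circle roots explicitly (the paper handles the latter via the constant $\epsilon$).
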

\begin{note}
We can see that if $f(x)\in\mathbb{Z}[x]$ is $k$-nonreciprocal for some $k\geq 0$, then $\pm(x-1)f(x)$ is also $k$-nonreciprocal. Therefore, it is enough to consider polynomials where both the leading coefficient and the constant term are both positive.
\end{note}
\begin{remark}
Borwein, Hare, and Mossinghoff noted that a Corollary to their result is that if $f$ is a nonreciprocal polynomial with all odd coefficients, then
\begin{equation*}
M(f)\geq\frac{1+\sqrt{5}}{2}=1.618...
\end{equation*}
By Theorem \ref{thm:1}, however, we may replace the condition that $f$ has all odd coefficients with the condition that for the smallest $k$ that we have $a_ka_n\neq a_0a_{n-k}$, then $|a_ka_n-a_0a_{n-k}|\geq 2$. Assuming that $|a_n|=|a_0|=1$ (for otherwise $M(f)\geq\min\{|a_0|,|a_n|\}\geq 2$), this condition is substantially weaker than the condition that $f$ is nonreciprocal and has all odd coefficients.
\end{remark}
\section{Proof and Example}
Our proof follows that of Borwein, Hare, and Mossinghoff in \cite{borwein}. Unlike Borwein, Hare, and Mossinghoff, however, we allow the innermost coefficients to not necessarily adhere to the reciprocal strucutre. We use the following result by Wiener, found on pg. 392 of \cite{schinzel}.
\begin{lemma}[Wiener]
\label{lem:1}
Suppose that $\phi(z)=\sum_{i\geq 0}\gamma_iz^i$, with $\gamma_i\in\mathbb{C}$ is analytic in an open disk containing $|z|\leq 1$ and satisfies $|\phi(z)|\leq 1$ on $|z|=1$. Then $|\gamma_i|\leq 1-|\gamma_0|^2$ for $i\geq 1$.
\end{lemma}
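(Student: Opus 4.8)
The plan is to get the estimate in two steps: a ``thinning'' of the power series that reduces the claim to its case $i=1$, followed by a Schwarz-lemma argument that settles that case. Throughout, since $\phi$ is analytic on a neighbourhood of $|z|\le 1$ and $|\phi|\le 1$ on $|z|=1$, the maximum modulus principle gives $|\phi(z)|\le 1$ for every $|z|\le 1$, so $\phi$ maps the closed unit disk into itself.

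\textbf{Reduction to $i=1$.} Fix $i\ge 1$ and let $\omega$ be a primitive $i$-th root of unity. Consider the averaged function
\[
\Psi(z)=\frac{1}{i}\sum_{j=0}^{i-1}\phi(\omega^{j}z).
\]
It is analytic on the same neighbourhood of $|z|\le 1$, it satisfies $|\Psi(z)|\le \frac{1}{i}\sum_{j}|\phi(\omega^{j}z)|\le 1$ on $|z|\le 1$, and since $\frac{1}{i}\sum_{j}\omega^{jm}$ equals $1$ when $i\mid m$ and $0$ otherwise, its Taylor expansion is $\Psi(z)=\sum_{l\ge 0}\gamma_{il}z^{il}$. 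Because $\Psi(\omega z)=\Psi(z)$, we may write $\Psi(z)=\chi(z^{i})$ for an analytic function $\chi(w)=\sum_{l\ge 0}\gamma_{il}w^{l}$ on a neighbourhood of $|w|\le 1$ with $|\chi|\le 1$ there; note $\chi(0)=\gamma_0$ and $\chi'(0)=\gamma_i$. Thus it suffices to show: any analytic $\chi$ with $|\chi|\le 1$ on $|z|\le 1$ satisfies $|\chi'(0)|\le 1-|\chi(0)|^{2}$.

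\textbf{The case $i=1$.} If $|\chi(0)|=1$, then $\chi$ is a unimodular constant by the maximum modulus principle and $\chi'(0)=0$, so the bound holds. Otherwise set $c=\chi(0)$ with $|c|<1$ and form
\[
\eta(z)=\frac{c-\chi(z)}{1-\bar{c}\,\chi(z)}.
\]
For $|c|<1$ the M\"obius transformation $w\mapsto(c-w)/(1-\bar{c}w)$ is a holomorphic automorphism of the closed unit disk; in particular $1-\bar{c}\,\chi(z)$ is nonvanishing, so $\eta$ is analytic on $|z|\le 1$ with $|\eta|\le 1$ there, and $\eta(0)=0$. The Schwarz lemma then gives $|\eta'(0)|\le 1$, and a direct differentiation yields $\eta'(0)=-\chi'(0)/(1-|c|^{2})$, whence $|\chi'(0)|\le 1-|c|^{2}=1-|\chi(0)|^{2}$. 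Feeding this back through the reduction, $|\gamma_i|=|\chi'(0)|\le 1-|\chi(0)|^{2}=1-|\gamma_0|^{2}$ for every $i\ge 1$.

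\textbf{Main obstacle.} The one non-routine point is the reduction from a general index $i$ to $i=1$. Applying the automorphism-and-Schwarz argument directly to $\phi$ only controls the coefficient of $z$ in the transformed series, and for $i\ge 2$ that coefficient mixes $\gamma_1,\ldots,\gamma_i$ rather than isolating $\gamma_i$; the root-of-unity averaging is precisely what extracts the single coefficient $\gamma_i$ while keeping the constraint ``modulus $\le 1$ on the disk'' intact. Everything else is standard: the Schwarz lemma, the fact that $w\mapsto(c-w)/(1-\bar{c}w)$ is a disk automorphism for $|c|<1$ (which also guarantees the denominator of $\eta$ never vanishes), and the short computation of $\eta'(0)$.
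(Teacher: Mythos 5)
The paper offers no proof of this lemma at all: it is quoted from page 392 of Schinzel's book, attributed to F.~Wiener, and used as a black box, so there is no in-paper argument to compare against. Your proof is correct and complete, and it is in fact essentially Wiener's original argument (the one that appears in Bohr's work on the Bohr radius): the root-of-unity averaging $\Psi(z)=\frac{1}{i}\sum_{j}\phi(\omega^{j}z)$ annihilates every Taylor coefficient whose index is not a multiple of $i$ while preserving both analyticity and the bound $|\cdot|\le 1$ on the closed disk, and writing $\Psi(z)=\chi(z^{i})$ reduces the general index to the Schwarz--Pick inequality at the origin, $|\chi'(0)|\le 1-|\chi(0)|^{2}$, which your M\"obius-plus-Schwarz computation establishes correctly (the formula $\eta'(0)=-\chi'(0)/(1-|c|^{2})$ checks out, and the case $|\chi(0)|=1$ is properly handled via constancy). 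Two cosmetic remarks: the nonvanishing of $1-\bar{c}\,\chi(z)$ is most directly justified by $|\bar{c}\,\chi(z)|\le|c|<1$ on the closed disk rather than by appeal to the ``automorphism of the closed disk'' (which is really an automorphism of the open disk extending continuously to the boundary); and it is worth a half-sentence that $\chi(w)=\sum_{l\ge 0}\gamma_{il}w^{l}$ converges on $|w|<R^{i}$ when $\phi$ is analytic on $|z|<R$, so $\chi$ genuinely lives on a neighbourhood of the closed disk. Neither point is a gap.
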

We now prove Theorem \ref{thm:1}.
\begin{proof}[Proof of Theorem \ref{thm:1}]
Let $f(z)=\sum_{i=0}^na_iz^i=a_n(z-\alpha_1)...(z-\alpha_n)$ satisfy the hypothesis in the theorem with $a_0$ and $a_n$ both being positive. Write $f^*(z)=\sum_{i=0}^nd_iz^i$ so that $a_0d_i=a_na_i$ for all $1\leq i\leq k-1$. Let the power series of $1/f^*(z)$ be $\sum_{i\geq 0}e_iz^i$. Then we have $e_0=1/a_n$. Let
\begin{equation*}
G(z)=f(z)/f^*(z)=\sum_{i\geq 0}q_iz^i.
\end{equation*}
It doesn't matter if $q_i\in\mathbb{Z}$ for all $i\in\mathbb{N}\cup\{0\}$ or not. We have $q_0=\frac{a_0}{a_n}$. From $f^*(z)G(z)=f(z)$, we obtain $\sum_{i=0}^jd_iq_{j-i}=a_j$. Thus, for $j\geq 1$, we have
\begin{equation*}
a_nq_j=(a_j-q_0d_j)-\sum_{i=1}^{j-1}d_iq_{j-i}.
\end{equation*}
From $a_0d_i=a_na_i$, we can see by induction that $q_i=0$ for all $1\leq i\leq k-1$ and $q_k=\frac{a_k}{a_n}-\frac{a_oa_{n-k}}{a_n^2}\neq 0$.
\newline
\newline
Let $\epsilon=-1$ if $f(z)$ has a zero of odd mulitiplicity at $z=1$ and $\epsilon=1$ otherwise. Since
\begin{equation*}
\prod_{|\alpha_i|=1}\frac{z-\alpha_i}{1-\overline{\alpha_i}z}=\prod_{|\alpha_i|=1}\frac{-\alpha_i(1-z/\alpha_i)}{1-z/\alpha_i}=\prod_{|\alpha_i|=1}(-\alpha_i)=\epsilon,
\end{equation*}
we define
\begin{equation*}
g(z):=\epsilon\prod_{|\alpha_i|<1}\frac{z-\alpha_i}{1-\overline{\alpha_i}z}
\end{equation*}
and
\begin{equation*}
h(z):=\prod_{|\alpha_i|>1}\frac{1-\overline{\alpha_i}z}{z-\alpha_i}.
\end{equation*}
so that
\begin{equation*}
\frac{g(z)}{h(z)}=\frac{\prod_{i=1}^n(z-\alpha_i)}{\prod_{i=1}^n(1-\overline{\alpha_i}z)}=\frac{\prod_{i=1}^n(z-\alpha_i)}{\prod_{i=1}^n(1-\alpha_iz)}=\frac{f(z)}{f^*(z)}=G(z).
\end{equation*}
Since all poles of both $g(z)$ and $h(z)$ lie outside the unit disk, both functions are analytic in a region including $|z|\leq 1$. Also, if $|z|=1$ and $\beta\in\mathbb{C}$, then
\begin{equation*}
\left(\frac{z-\beta}{1-\overline{\beta}z}\right)\overline{\left(\frac{z-\beta}{1-\overline{\beta}z}\right)}=\left(\frac{z-\beta}{1-\overline{\beta}z}\right)\left(\frac{1/z-\overline{\beta}}{1-\beta/z}\right)=1
\end{equation*}
so $|g(z)|=|h(z)|=1$ on $|z|=1$. Let
\begin{equation*}
g(z)=\sum_{i\geq 0}b_iz^i
\end{equation*}
and
\begin{equation*}
h(z)=\sum_{i\geq 0}c_iz^i.
\end{equation*}
Since $g(z)=h(z)G(z)$, we have $b_i=c_iq_0$ for $0\leq i<k$ and $b_k=c_0q_k+c_kq_0$. Thus
\begin{equation*}
\left|c_0\left(\frac{a_k}{a_n}-\frac{a_0a_{n-k}}{a_n^2}\right)\right|=|c_0q_k|=|b_k-c_kq_0|\leq|b_k|+|c_k|q_0.
\end{equation*}
Notice that
\begin{equation}
c_0=|h(0)|=\prod_{|\alpha_i|>1}1/|\alpha_i|=|a_n|/M(f),\label{eqn1}
\end{equation}
so that
\begin{equation}
\left|\frac{1}{M(f)}\left(a_k-\frac{a_0a_{n-k}}{a_n}\right)\right|=|c_0q_k|\leq|b_k|+|c_k|q_0.
\label{eqn2}
\end{equation}
We now divide into two cases.
By Lemma \ref{lem:1}, we have $|c_k|\leq 1-c_o^2$ and $|b_k|\leq 1-b_0^2$. Notice that $b_0=c_0q_0$. Combining \eqref{eqn1} and \eqref{eqn2}, we have
\begin{align*}
\frac{\left|a_k-\frac{a_0a_{n-k}}{a_n}\right|}{M(f)}&\leq(1-b_0^2)+(1-c_0^2)q_0\\
&=(1-c_0^2q_0^2)+(1-c_0^2)q_0\\
&=(1+q_0)(1-q_0c_0^2)\\
&=(q_0+1)\left(1-\frac{q_0a_n^2}{M(f)^2}\right)\\
&=(q_0+1)\left(1-\frac{a_0a_n}{M(f)^2}\right).
\end{align*}
Thus we have
\begin{equation*}
M(f)\left|a_k-\frac{a_0a_{n-k}}{a_n}\right|\leq(q_0+1)(M(f)^2-a_0a_n).
\end{equation*}
This gives
\begin{equation*}
M(f)\geq\frac{\left|a_k-\frac{a_0a_{n-k}}{a_n}\right|+\sqrt{\left|a_k-\frac{a_0a_{n-k}}{a_n}\right|^2+4(q_0+1)^2a_0a_n}}{2(q_0+1)}.
\end{equation*}
The result follows.
\end{proof}
\begin{note}
If $|a_na_k-a_0a_{n-k}|>|a_0^2-a_n^2|$, then the above bound is non-trivial since then it will be greater than
\begin{align*}
\frac{|a_0^2-a_n^2|+\sqrt{|a_0^2-a_n^2|^2+4(a_0+a_n)^2|a_0a_n|}}{2(a_0+a_n)}&=\frac{|a_0-a_n|+\sqrt{(a_0-a_n)^2+4a_0a_n}}{2}\\
&=\frac{|a_0-a_n|+\sqrt{(a_0+a_n)^2}}{2}\\
&=\frac{|a_0-a_n|+a_0+a_n}{2}\\
&=\max\{a_n,a_0\},
\end{align*}
which is the trivial bound.
\end{note}
\begin{note}
If $f(x)$ is a reciprocal polynomial, the above bound is trivial for we would have $a_n=a_0$ and $a_k=a_{n-k}$ so that
\begin{equation*}
|a_na_k-a_0a_{n-k}|=0.
\end{equation*}
Thus, by Theorem \ref{thm:1}, we have
\begin{equation*}
M(f)\geq\frac{\sqrt{4(a_0+a_n)^2a_ na_0}}{2(a_0+a_n)}=a_n,
\end{equation*}
which is trivial.
\end{note}
We now show some examples, indicating that our bound in Theorem \ref{thm:1} is sharp.
\begin{ex}
Let $k,n\in\mathbb{N}$ where $n>2k$ and $n\neq 3k$ and $a,b,c\in\mathbb{Z}$ such that $a>0>c$, and $a-|b|\leq -c\leq a+|b|$. Consider the polynomial $f(x)=(ax^{2k}+bx^k+c)(x^{n-2k}-1)$, which satisfies $a_na_i=a_0a_{n-i}$ for all $1\leq i\leq k-1$ and $a_na_k\neq a_0a_{n-k}$ where $f(x)=\sum_{i=0}^{n}a_ix^i$. Let $\alpha=|a_ka_n-a_0a_{n-k}|$ We have
\begin{equation*}
M(f)=\frac{\alpha+\sqrt{\alpha^2+4(a_0+a_n)^2a_0a_n}}{2(a_0+a_n)}.
\end{equation*}
Let $k,n\in\mathbb{N}$ where $n\geq 2k$ and $a,b,c\in\mathbb{Z}$ satisfying the given conditions. We have
\begin{equation*}
f(x)=ax^n+bx^{n-k}+cx^{n-2k}-ax^{2k}-bx^k-c
\end{equation*}
if $n>4k$,
\begin{equation*}
f(x)=ax^{4k}+bx^{3k}+(c-a)x^{2k}-bx^k-c,
\end{equation*}
if $n=4k$,
\begin{equation*}
f(x)=ax^n+bx^{n-k}-ax^{2k}+cx^{n-2k}-bx^k-c
\end{equation*}
if $4k>n>3k$, and
\begin{equation*}
f(x)=ax^n-ax^{2k}+bx^{n-k}-bx^k+cx^{n-2k}-c
\end{equation*}
if $3k>n>2k$.
In all cases, we can easily see that if we write $f(x)=\sum_{i=0}^{n}a_ix^n$, then we have $a_n=a$, $a_0=-c$, $a_k=-b$, $a_{n-k}=b$, $a_na_i=a_0a_{n-i}$ for all $1\leq i\leq k-1$ and $a_na_k\neq a_0a_{n-k}$. We therefore have that $\alpha=|a_na_k-a_0a_{n-k}|=|b(a-c)|$.
\newline
\newline
Since all the roots of $x^{n-2k}-1$ have absolute value $1$, we have $M(f)=M(ax^{2k}+bx^k+c)$. By the quadratic formula, the roots of $ax^{2k}+bx^k+c$ are the $k$th roots of the numbers
\begin{equation*}
\frac{-b\pm\sqrt{b^2-4ac}}{2a}.
\end{equation*}
Since $c<0<a$, the absolute values of these numbers are
\begin{equation*}
\frac{\pm|b|+\sqrt{b^2-4ac}}{2a}
\end{equation*}
First, consider
\begin{equation}
\frac{|b|+\sqrt{b^2-4ac}}{2a}\label{root1}
\end{equation}
If $|b|>a$, then clearly this number is greater than $1$ so we may assume that $|b|\leq a$. By our assumption that $-c\geq a-|b|$, we then have
\begin{align*}
\frac{|b|+\sqrt{b^2-4ac}}{2a}&\geq\frac{|b|+\sqrt{b^2+4a(a-|b|)}}{2a}\\
&=\frac{|b|+\sqrt{4a^2-4a|b|+b^2}}{2|a|}\\
&=\frac{|b|+2a-|b|}{2a}\\
&=1.
\end{align*}
Now consider
\begin{equation}
\frac{\sqrt{b^2-4ac}-|b|}{2a}\label{root2}
\end{equation}
By our assumption that $-c\leq a+|b|$, we have
\begin{align*}
\frac{\sqrt{b^2-4ac}-|b|}{2a}&\leq\frac{\sqrt{b^2+4a(a+|b|)}-|b|}{2a}\\
&=\frac{\sqrt{4a^2+4a|b|+b^2}-|b|}{2a}\\
&=\frac{2a+|b|-|b|}{2a}\\
&=1.
\end{align*}
All of the $n$th roots of \eqref{root1} have absolute value at least $1$, while all of the $n$th roots of \eqref{root2} have absolute value at most $1$. Hence
\begin{equation*}
M(f)=\frac{|b|+\sqrt{b^2-4ac}}{2}.
\end{equation*}
Note that
\begin{align*}
\frac{|b|+\sqrt{b^2-4ac}}{2}&=\frac{|ba-cb|+\sqrt{(ba-cb)^2-4(a-c)^2ca}}{2(a-c)}
\end{align*}
since $a$ and $c$ have opposite signs.
Thus we obtain our bound.
\end{ex}
\begin{note}
If we impose the restriction $a_0,a_n=\pm 1$ on this example, then we will have $\alpha$ being even. It is unknown whether the inequality in Theorem \ref{thm:1} is still sharp if we impose $a_0,a_n=\pm 1 $ and $\alpha$ being odd.
\end{note}
\section{Future Work}
In calculating the Mahler measure of reciprocal or ``almost" reciprocal polynomials, there are a few questions worth pursuing. For instance, what if we have a polynomial $f(x)$ that is ``almost reciprocal" as defined in this paper, as well as being ``almost reciprocal" as defined by Borwein Hare, and Mossinghoff in \cite{borwein}? Can we get better bounds for these polynomials than the bounds we have shown here and the bounds proved by Borwein, Hare, and Mossinghoff? Another question is if we can use the ideas presented here on sparse polynomials.
\section{Acknowledgements}
The author would like to thank Dr. Kevin Hare and Dr. Yu-Ru Liu for their support and suggestions for this paper.

\end{document}